\newcommand{\raisebox{-5pt}{
\begingroup%
  \makeatletter%
  \providecommand\color[2][]{%
    \errmessage{(Inkscape) Color is used for the text in Inkscape, but the package 'color.sty' is not loaded}%
    \renewcommand\color[2][]{}%
  }%
  \providecommand\transparent[1]{%
    \errmessage{(Inkscape) Transparency is used (non-zero) for the text in Inkscape, but the package 'transparent.sty' is not loaded}%
    \renewcommand\transparent[1]{}%
  }%
  \providecommand\rotatebox[2]{#2}%
  \ifx\svgwidth\undefined%
    \setlength{\unitlength}{18.70443773bp}%
    \ifx\svgscale\undefined%
      \relax%
    \else%
      \setlength{\unitlength}{\unitlength * \real{\svgscale}}%
    \fi%
  \else%
    \setlength{\unitlength}{\svgwidth}%
  \fi%
  \global\let\svgwidth\undefined%
  \global\let\svgscale\undefined%
  \makeatother%
  \begin{picture}(1,0.91836368)%
    \put(0,0){\includegraphics[width=\unitlength,page=1]{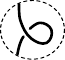}}%
  \end{picture}%
\endgroup%
}}{\raisebox{-5pt}{
\begingroup%
  \makeatletter%
  \providecommand\color[2][]{%
    \errmessage{(Inkscape) Color is used for the text in Inkscape, but the package 'color.sty' is not loaded}%
    \renewcommand\color[2][]{}%
  }%
  \providecommand\transparent[1]{%
    \errmessage{(Inkscape) Transparency is used (non-zero) for the text in Inkscape, but the package 'transparent.sty' is not loaded}%
    \renewcommand\transparent[1]{}%
  }%
  \providecommand\rotatebox[2]{#2}%
  \ifx\svgwidth\undefined%
    \setlength{\unitlength}{18.70443773bp}%
    \ifx\svgscale\undefined%
      \relax%
    \else%
      \setlength{\unitlength}{\unitlength * \real{\svgscale}}%
    \fi%
  \else%
    \setlength{\unitlength}{\svgwidth}%
  \fi%
  \global\let\svgwidth\undefined%
  \global\let\svgscale\undefined%
  \makeatother%
  \begin{picture}(1,0.91836368)%
    \put(0,0){\includegraphics[width=\unitlength,page=1]{R1-plus.pdf}}%
  \end{picture}%
\endgroup%
}}
\newcommand{\raisebox{-5pt}{
\begingroup%
  \makeatletter%
  \providecommand\color[2][]{%
    \errmessage{(Inkscape) Color is used for the text in Inkscape, but the package 'color.sty' is not loaded}%
    \renewcommand\color[2][]{}%
  }%
  \providecommand\transparent[1]{%
    \errmessage{(Inkscape) Transparency is used (non-zero) for the text in Inkscape, but the package 'transparent.sty' is not loaded}%
    \renewcommand\transparent[1]{}%
  }%
  \providecommand\rotatebox[2]{#2}%
  \ifx\svgwidth\undefined%
    \setlength{\unitlength}{18.70443773bp}%
    \ifx\svgscale\undefined%
      \relax%
    \else%
      \setlength{\unitlength}{\unitlength * \real{\svgscale}}%
    \fi%
  \else%
    \setlength{\unitlength}{\svgwidth}%
  \fi%
  \global\let\svgwidth\undefined%
  \global\let\svgscale\undefined%
  \makeatother%
  \begin{picture}(1,0.91836368)%
    \put(0,0){\includegraphics[width=\unitlength,page=1]{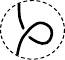}}%
  \end{picture}%
\endgroup%
}}{\raisebox{-5pt}{
\begingroup%
  \makeatletter%
  \providecommand\color[2][]{%
    \errmessage{(Inkscape) Color is used for the text in Inkscape, but the package 'color.sty' is not loaded}%
    \renewcommand\color[2][]{}%
  }%
  \providecommand\transparent[1]{%
    \errmessage{(Inkscape) Transparency is used (non-zero) for the text in Inkscape, but the package 'transparent.sty' is not loaded}%
    \renewcommand\transparent[1]{}%
  }%
  \providecommand\rotatebox[2]{#2}%
  \ifx\svgwidth\undefined%
    \setlength{\unitlength}{18.70443773bp}%
    \ifx\svgscale\undefined%
      \relax%
    \else%
      \setlength{\unitlength}{\unitlength * \real{\svgscale}}%
    \fi%
  \else%
    \setlength{\unitlength}{\svgwidth}%
  \fi%
  \global\let\svgwidth\undefined%
  \global\let\svgscale\undefined%
  \makeatother%
  \begin{picture}(1,0.91836368)%
    \put(0,0){\includegraphics[width=\unitlength,page=1]{R1-minus.pdf}}%
  \end{picture}%
\endgroup%
}}
\newcommand{\raisebox{-5pt}{
\begingroup%
  \makeatletter%
  \providecommand\color[2][]{%
    \errmessage{(Inkscape) Color is used for the text in Inkscape, but the package 'color.sty' is not loaded}%
    \renewcommand\color[2][]{}%
  }%
  \providecommand\transparent[1]{%
    \errmessage{(Inkscape) Transparency is used (non-zero) for the text in Inkscape, but the package 'transparent.sty' is not loaded}%
    \renewcommand\transparent[1]{}%
  }%
  \providecommand\rotatebox[2]{#2}%
  \ifx\svgwidth\undefined%
    \setlength{\unitlength}{18.70443773bp}%
    \ifx\svgscale\undefined%
      \relax%
    \else%
      \setlength{\unitlength}{\unitlength * \real{\svgscale}}%
    \fi%
  \else%
    \setlength{\unitlength}{\svgwidth}%
  \fi%
  \global\let\svgwidth\undefined%
  \global\let\svgscale\undefined%
  \makeatother%
  \begin{picture}(1,0.91836368)%
    \put(0,0){\includegraphics[width=\unitlength,page=1]{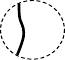}}%
  \end{picture}%
\endgroup%
}}{\raisebox{-5pt}{
\begingroup%
  \makeatletter%
  \providecommand\color[2][]{%
    \errmessage{(Inkscape) Color is used for the text in Inkscape, but the package 'color.sty' is not loaded}%
    \renewcommand\color[2][]{}%
  }%
  \providecommand\transparent[1]{%
    \errmessage{(Inkscape) Transparency is used (non-zero) for the text in Inkscape, but the package 'transparent.sty' is not loaded}%
    \renewcommand\transparent[1]{}%
  }%
  \providecommand\rotatebox[2]{#2}%
  \ifx\svgwidth\undefined%
    \setlength{\unitlength}{18.70443773bp}%
    \ifx\svgscale\undefined%
      \relax%
    \else%
      \setlength{\unitlength}{\unitlength * \real{\svgscale}}%
    \fi%
  \else%
    \setlength{\unitlength}{\svgwidth}%
  \fi%
  \global\let\svgwidth\undefined%
  \global\let\svgscale\undefined%
  \makeatother%
  \begin{picture}(1,0.91836368)%
    \put(0,0){\includegraphics[width=\unitlength,page=1]{R1-zero.pdf}}%
  \end{picture}%
\endgroup%
}}
\newtheorem{Teo}{Theorem}[section]
\newtheorem{Def}[Teo]{Definition}
\newtheorem{Prop}[Teo]{Proposition}
\newtheorem{Lema}[Teo]{Lemma}
\newtheorem{Cor}[Teo]{Corollary}
\newtheorem{Ex}[Teo]{Example}
\newtheorem{Rem}[Teo]{Remark}
\begin{document}

\title[Geometric realization of almost-extreme Khovanov of semiadequate links]{\textsc{Geometric realization of the almost-extreme Khovanov homology of semiadequate links}}

\author{J\'{o}zef H. Przytycki}
\address{Department of Mathematics at The George Washington University, Washington DC, USA $\&$ University of Gda\'{n}sk, Poland.}
\email{przytyck@gwu.edu}
	
\author{Marithania Silvero}
\address{Institute of Mathematics of the Polish Academy of Sciences, Warsaw, Poland $\&$ Barcelona Graduate School of Mathematics at Universitat de Barcelona, Spain.}
\email{marithania@us.es}

\maketitle


\noindent \textbf{Abstract} \,
We introduce the notion of partial presimplicial set and construct its geometric realization. We show that any semiadequate diagram yields a partial presimplicial set leading to a geometric realization of the almost-extreme Khovanov homology of the diagram. We give a concrete formula for the homotopy type of this geometric realization, involving wedge of spheres and a suspension of the projective plane.

\bigskip

\noindent \textbf{Keywords:} Khovanov homology, semiadequate knot, geometric realization, homotopy type, presimplicial set. 
\mbox{ }


\section{Introduction}
Khovanov homology is a powerful link invariant introduced by Mikhail Khovanov as a categorification of the Jones polynomial \cite{Kho}.  The geometric realizations of Khovanov homology were first proposed by Chmutov (2005) and Everitt and Turner (2011), see \cite{Chm} and \cite{ET}, respectively.

The ultimate geometric realization was given by Lipschitz and Sarkar in \cite{LS}. Given a link diagram $D$, they constructed a Khovanov suspension spectrum, $\mathcal{X}_{Kh}(D)$, such that its reduced cellullar cochain complex is isomorphic to the Khovanov complex of $D$. Moreover, they proved that the stable homotopy type of $\mathcal{X}_{Kh}(D)$ is an invariant of the isotopy class of the corresponding link.

In the case of extreme Khovanov homology, the concrete geometric realization of Khovanov homology was introduced and developed in \cite{GMS} and further in \cite{PSil}, where it was conjectured (and proved for some special cases) that it has the homotopy type of a wedge of spheres.

In this paper we give a simple construction of geometric realization of almost-extreme Khovanov complex of semiadequate links and describe the precise homotopy type of this space. For this purpose we introduce the notion of partial presimplicial sets, a generalization of presimplicial sets. This allows us to exactly determine the homotopy type of our spaces. 

The main result of this paper is the following.

\begin{Teo}\label{teomain}
Let $D$ be an $A$-adequate link diagram and $G=G_A(D)$ the state graph associated to the state $s_A$. Then, the almost-extreme Khovanov complex of $D$ is chain homotopy equivalent to the cellular chain complex of 
\begin{itemize}
\item[(i)] $\bigvee_{p_1}S^{c-2} \vee S^{c-1}$  if $G$ is bipartite, 
\item[(ii)]$\bigvee_{p_1 - 1} S^{c-2} \vee \sum^{c-3} \mathbb{R}P^2$ otherwise,
\end{itemize} 
where $c$ is the number of crossings of $D$ and $p_1$ the cyclomatic number of the graph obtained from $G$ by replacing each multiedge by a single edge. 
\end{Teo}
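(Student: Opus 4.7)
The plan is to exploit the partial presimplicial set $\mathcal{X}(D)$ constructed in the previous sections of the paper, whose reduced cellular chain complex is by construction chain homotopic to the almost-extreme Khovanov complex of $D$. Thus the theorem reduces to identifying the homotopy type of $|\mathcal{X}(D)|$ purely in terms of $G=G_A(D)$. First I would describe the cells of $\mathcal{X}(D)$ explicitly via the Kauffman states contributing to the almost-extreme quantum grading: they correspond to subsets of edges of $G$ (equivalently, sets of crossings switched to the $B$-resolution) together with the labelings of the resulting circles that are compatible with the extremal quantum degree. $A$-adequacy guarantees that $G$ has no loops, so the combinatorics are governed entirely by which edges are chosen and how they merge vertices of $G$.

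Next I would try to isolate the topology of $|\mathcal{X}(D)|$ by constructing a discrete Morse matching on the face poset of $\mathcal{X}(D)$. The expected $\bigvee S^{c-2}$ summands should correspond to independent cycles of the simple graph $G'$ underlying $G$, so a natural strategy is to fix a spanning forest $T$ of $G'$ and pair cells along tree edges, leaving one critical $(c-2)$-cell for every non-tree simple edge and one remaining top-dimensional critical cell of dimension $c-1$. Verifying acyclicity of such a matching would imply that $|\mathcal{X}(D)|$ is homotopy equivalent to a CW complex whose cell count matches both cases of the theorem; the $p_1$ spheres $S^{c-2}$ together with either an $S^{c-1}$ or the single $(c-1)$-cell of $\Sigma^{c-3}\mathbb{R}P^2$.

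The delicate remaining step is to analyze the attaching map of the top cell. Traveling around a cycle of $G$ induces a sequence of face-map identifications in $\mathcal{X}(D)$ which, up to sign, depends on the parity of the cycle. When $G$ is bipartite every cycle is even, the induced identifications are orientation-compatible, so the top cell attaches trivially and splits off as $S^{c-1}$, yielding case (i). In the non-bipartite case an odd cycle obstructs global compatibility of orientations: tracking signs along the cycle forces the attaching map of the top cell to factor through a degree $2$ map on $S^{c-2}$, which realizes $\Sigma^{c-3}\mathbb{R}P^2$; the absorption of one of the $S^{c-2}$ summands into this non-orientable piece explains the drop from $p_1$ to $p_1-1$ in case (ii).

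The main obstacle will be the sign/orientation bookkeeping in the last step: one must certify that the top-cell attaching map is genuinely of degree $0$ or $2$ according to bipartiteness of $G$, rather than of some intermediate degree. A cleaner alternative, which I would pursue if the direct approach proves unwieldy, is induction on the number of edges of $G$: start with a base case where $G$ is a single multi-edge (a known computation), and at each step show that adding an edge either produces a new $S^{c-2}$ summand (if it closes an even cycle or no cycle at all), or, the first time it creates an odd cycle, converts the existing $S^{c-1}$ into $\Sigma^{c-3}\mathbb{R}P^2$ at the cost of one $S^{c-2}$ wedge summand. This localises the hard parity argument to a single inductive step.
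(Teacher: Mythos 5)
Your proposal follows the same two-step strategy as the paper: first reduce to computing the homotopy type of the geometric realization $|\mathcal{X}_D|$ using the chain homotopy equivalence established in Section~6, then determine that homotopy type combinatorially from $G=G_A(D)$ using a spanning tree and a parity analysis along non-tree edges. The conceptual content of the bipartiteness dichotomy is identical: even cycles give orientation-compatible gluings and a wedge of spheres, an odd cycle produces the suspended projective plane at the cost of one sphere summand.

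Where you diverge is in the technical vehicle. You propose a discrete Morse matching pairing top cells with codimension-one cells along a spanning tree, leaving one critical $(c-1)$-cell and $p_1$ critical $(c-2)$-cells, and then you acknowledge that identifying the degree of the top-cell attaching map is ``the delicate remaining step.'' The paper instead proceeds geometrically: gluing the $m+1$ copies of $\Delta^{c-1}$ along the spanning-tree faces produces an actual PL ball $D^{c-1}$, and the $p_1$ remaining face identifications become pairwise gluings of $(c-2)$-disks in $\partial D^{c-1}$. A standard general-position lemma (Lemma~5.5) then gives the answer directly: all orientation-reversing gluings yield $\bigvee_{p_1} S^{c-2}\vee S^{c-1}$, while any orientation-preserving gluing yields $\bigvee_{p_1-1} S^{c-2}\vee\Sigma^{c-3}\mathbb{R}P^2$. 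This lemma is precisely the clean packaging of the sign/orientation bookkeeping you flag as the main obstacle, so the paper's route avoids the potentially unwieldy attaching-map computation that your discrete Morse plan defers. Two smaller gaps in your outline: (a) you treat the lower cells of $\mathcal{X}_D$ as ``subsets of edges of $G$,'' but in the actual construction $X_k$ for $k<c-2$ is nonempty only when $G$ has multiedges (the complementary edges must be parallel), and the paper needs a separate argument to fold the multiedge case into the simple-graph case by merging the parallel faces into one identification; (b) the small cases $c\le 3$, where the disk-gluing lemma does not apply, are handled ad hoc in the paper but are not addressed in your proposal. With these points filled in, either approach works, but the geometric ball-plus-Lemma~5.5 route is shorter because it computes the attaching map for free.
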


Throughout the paper we work in PL category. Moreover, all homology of topological spaces are reduced.

The plan of the paper is as follows. In Section 2 we introduce the notion of partial presimplicial set and its geometric realization. We briefly review Khovanov homology and define almost-extreme Khovanov complex in Section 3. Section 4 is devoted to almost-extreme Khovanov homology of semiadequate links. In Section 5 starting from a semiadequate diagram $D$, we give a method to construct an associated partial presimplicial set $\mathcal{X}_D$ and study its homotopy type. Finally, in Section 6 we show that the cellular chain complex of the geometric realization of $\mathcal{X}_D$ is equivalent to the almost-extreme Khovanov complex of $D$. 

\section{Partial presimplicial sets}

In this section we introduce the notion of partial presimplicial set and its geometric realization. Partial presimplicial sets are special cases of presimplicial modules, but unlike general presimplicial modules they can be easily endowed with a geometric realization that will be the key point of the construction of the geometric realization of almost-extreme Khovanov homology introduced in Section \ref{FromDtoXD}. 

Partial presimplicial modules and simplicial modules were introduced by Eilenberg and Zilber in \cite{EZ} under the name of semisimplicial complexes and complete semisimplicial complexes, respectively (compare \cite{Lod}).

The definition of \textit{partial presimplicial sets} is based on two equivalent conditions: the first one uses properties of partial functions and the second ones places partial presimplicial sets in the context of presimplicial modules.

\begin{Def} A partial presimplicial set $\mathcal X = (X_n,d_i)$ consists of a sequence of sets $\{X_{n}\}_{n \geq 0}$ together with a collection of partial maps defined only on subsets $\operatorname{Dom}(d_{i,n})$ of $X_n$, $d_{i,n} = d_i: \operatorname{Dom}(d_{i,n}) \rightarrow X_{n-1}$, satisfying the following two equivalent conditions: 
\begin{itemize}
\item[(1)] For $0\leq i<j \leq n$,  either $d_id_j$ and $d_{j-1} d_i$ are defined and satisfy $d_id_j = d_{j-1} d_i$, or they are both undefined.   
\item[(2)] The natural extension\footnote{Any partial presimplicial set $\mathcal{X} = (X_n,d_i)$ can be naturally endowed with the structure of a presimplicial module by setting $C_n=RX_n$, that is, the free $R$-module with basis $X_n$, and extending $d_{i,n}$ to the homomorphism $C_n \rightarrow C_{n-1}$ by declaring $d_{i,n}(a)=0$ for $a \in X_n \setminus Dom(d_{i,n})$. In this way, a presimplicial module leads to the chain complex $(C_n,\partial_n)$ by defining $\partial_n=\sum_{i=0}^n(-1)^id_i$ and further to a homology (and cohomology) module.} of $(X_n, d_i)$, $(RX_n,d_i)$, is a presimplicial module and for any $x_n \in X_n$ either $d_i(x_n) \in  X_{n-1}$ or $d_i(x_n)=0$.
\end{itemize}
\end{Def}

We say that a partial presimplicial set is \textit{proper} if it is not a presimplicial set.

The standard geometric realization of presimplicial sets, where $X_n$ are names of simplexes which are glued by following instructions given by the face maps $d_i$, can be extended to the case of partial presimplicial sets in the following way\footnote{The geometric realization of partial presimplicial sets can be defined in various different ways; we describe here the \textit{pointed geometric realization}, which we find suitable for the goal of this paper.}:

\begin{Def}Let $\Delta^n=\{(x_0,...,x_n)\in R^{n+1} \ | \ \Sigma_{i=0}^n x_i=1,\ x_i\geq 0 \}$ be the ``model simplex'' and $d^{i,{n-1}}=d^i: \Delta^{n-1} \to \Delta^n$ the embedding maps given by $d^i(x_0,...,x_{n-1})= (x_0,...,x_{i-1},0,x_i,...,x_{n-1})$, for $0 \leq i \leq n$.  The (pointed) geometric realization of the proper partial presimplicial set $\mathcal{X} = (X_n, d_i)$, $|\mathcal{X}|$, is the $CW$-complex defined as $$|{\mathcal X}| = b \sqcup \bigsqcup_{n\geq 0} X_n \times \Delta^n \diagup \sim,$$ where the relation $\sim$ is given by $(d_i(a), t) \sim (a,d^i(t))$ for $a \in X_n$, $t \in \Delta^{n-1}$ if $d_i(a) \neq 0$, and $(a,d^i(t)) \sim b$ otherwise. 
\end{Def}

Recall that in the geometric realization of presimplicial sets the extra vertex $b$ is not added. 

\begin{Ex}\label{exinicial}
Consider the partial presimplicial set $\mathcal X = (X_n,d_i)$ given by the following sets and face maps:
$$X_0=\{\emptyset\}, \quad \quad X_1=\{r_0, r_1, r_2\}, \quad \quad X_2=\{T_0, T_1, T_2\}, $$ 
$$d_0(T_0)=r_2, \quad  d_2(T_0)=r_0, \quad d_0(T_1)=r_2, \quad d_1(T_1)=r_1,\quad  d_1(T_2)=r_1, \quad  d_2(T_2)=r_0.$$

\begin{figure}
\centering
\includegraphics[width = 11cm]{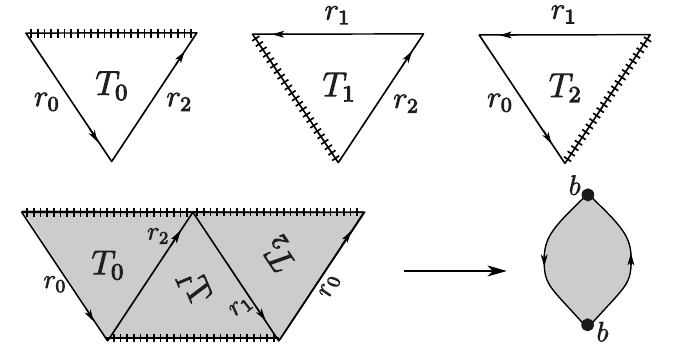}
\caption{\small{The geometric realization of the partial presimplicial set in Example \ref{exinicial}. Simplexes that must be collapsed to point $b$ are represented by dotted lines. After collapsing them one gets $\mathbb{R}P^2$.}}
\label{projective}
\end{figure}

The geometric realization of $\mathcal{X}$, $|\mathcal{X}|$, which is illustrated in Figure \ref{projective}, is homeomorphic to the projective plane $\mathbb{R}P^2$. 
\end{Ex}

\section{Khovanov homology and almost-extreme complex}\label{seckhovanov}

In this section we review the main ideas of Khovanov homology of unoriented (framed) links following Viro \cite{Vir1, Vir2}, and introduce the notion of almost-extreme Khovanov complex.

Let $D$ be an unoriented link diagram. A Kauffman state $s$ assigns a label, $A$ or $B$, to each crossing of $D$, i.e., $s: cr(D) \rightarrow \{A,B\}$. Let $\mathcal{S}$ be the collection of $2^c$ states of $D$, where $c=|cr(D)|$. For $s \in \mathcal{S}$, write $\sigma(s) = |s^{-1}(A)| - |s^{-1}(B)|$. We denote by $sD$ the system of circles and chords obtained after smoothing each crossing of $s$ according to its label by following the convention in Figure \ref{markers}. Write $|sD|$ for the number of circles in $sD$. 

\begin{figure}
\centering
\includegraphics[width = 12cm]{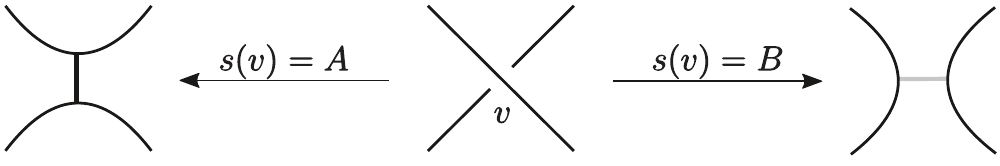}
\caption{\small{Smoothing of a crossing according to its $A$ or $B$-label. $A$-chords (resp. $B$-chords) are represented by dark (resp. light) segments.}}
\label{markers}
\end{figure}

Enhance a state $s$ with a map $\epsilon$ which associates a sign $\epsilon_i = \pm 1$ to each of the $|sD|$ circles in $sD$. We keep the letter $s$ for the enhanced state $(s,\epsilon)$ to avoid cumbersome notation. Write $\tau(s) = \sum_{i=1}^{|sD|} \epsilon_i$, and define, for the enhanced state $s$, the integers
$$
i(s) = \sigma(s), \quad j(s) = \sigma(s) + 2\tau(s).
$$

Given $s$ and $t$ two enhanced states of $D$, $t$ is said to be adjacent to $s$ if $i(t) = i(s) - 2$, $j(t) = j(s)$, both of them have identical labels except for one crossing $x$, where $s$ assigns an $A$-label and $t$ a $B$-label, and they assign the same sign to the common circles in $sD$ and $tD$.

Note that the circles which are not common to $sD$ and $tD$ are those touching the crossing $x$. In fact, passing from $sD$ to $tD$ can be realized by either merging two circles into one, or splitting one circle into two. The adjacency condition is equivalent to saying that $t$ is adjacent to $s$ if and only if they differ in one of the six ways shown in Figure \ref{differentials}.

\begin{figure}
\centering
\includegraphics[width = 14cm]{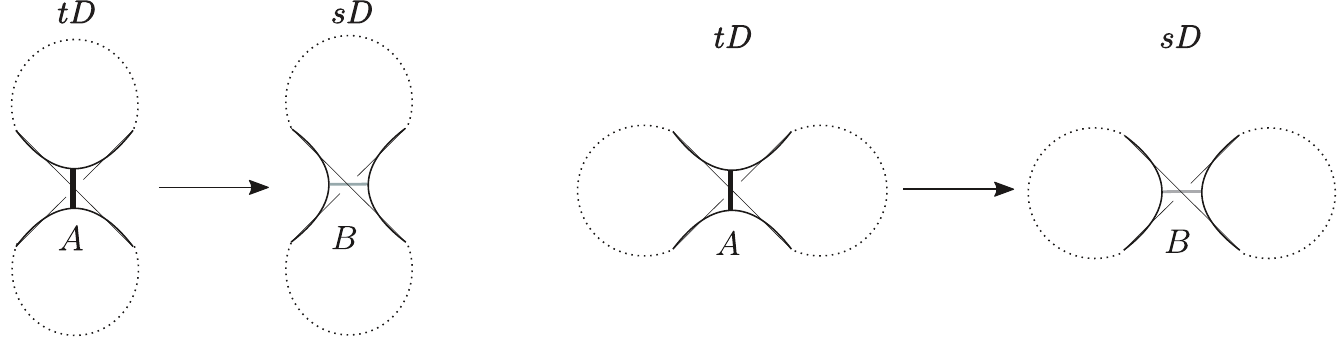}
\caption{\small{All possible enhancements when merging two circles are: $(+,- \rightarrow +)$, $(-,+ \rightarrow +)$, $(-,- \rightarrow -)$. The possibilities for a splitting are: $(+ \rightarrow +,+)$, $(- \rightarrow +,-)$ or $(- \rightarrow -,+)$.}}
\label{differentials}
\end{figure}

Let $C_{i,j}(D)$ be the free abelian group generated by the set of enhanced states $s$ of $D$ with $i(s) = i$ and $j(s) = j$. Order the crossings of $D$. For each integer $j$ consider the degree $-2$ complex
$$
\cdots \quad \longrightarrow \quad C_{i,j}(D) \quad \stackrel{\partial_i}{\longrightarrow} \quad C_{i-2,j}(D) \quad \longrightarrow \quad \cdots
$$
with differential $\partial_i(s) = \sum\limits_{t \in \mathcal{S}} (s:t) t$, with $(s:t) = 0$ if $t$ is not adjacent to $s$, and otherwise $(s:t) = (-1)^k$, with $k$ the number of $B$-labeled crossings in $s$ coming after crossing~$x$ in the chosen ordering. 

With the notation above, $\partial_{i-2} \circ \partial_i = 0$ and the corresponding homology groups
$$
H_{i,j}(D)=\frac{\textnormal{ker} (\partial_{i,j})}{\textnormal{im}(\partial_{i+2,j})}
$$
are the (framed) Khovanov homology groups of $D$, and they categorify the Kauffman bracket polynomial.

These groups are invariant under Reidemeister II and III moves. The effect of Reidemeister I move in Khovanov homology is a shift in the gradings, namely 
$$H_{i+1,j+3} \left(\raisebox{-5pt}{}\right) = H_{i,j}\left(\raisebox{-5pt}{}\right) = H_{i-1,j-3}\left(\raisebox{-5pt}{}\right).$$

\begin{Rem}
In this paper we work with the framed unoriented version of Khovanov homology. This version is equivalent to its oriented version, which categorifies the unreduced Jones polynomial. Framed and oriented versions are related by $$H_{i,j}(D) = H^{I, J}(\vec{D}),$$ 
where $I=\frac{w-i}{2}$ and $J= \frac{3w-j}{2}$, with $w = w(\vec{D})$ the writhe of the oriented diagram $\vec{D}$.
\end{Rem}

\begin{Def}
Given an unoriented link diagram $D$, let \vspace{-0.09cm} $$\vspace{-0.02cm} j_{\max}(D) = \max \{j(s) \, / \, s \mbox{ is an  enhaced state of } D\} \quad \mbox{and} \quad j_{\rm{almax}}(D) = j_{\max}(D) - 4.$$  We will refer to the complex $\{C_{i,j_{\max}}(D), \partial_i\}$ as the extreme Khovanov complex and to the corresponding homology groups $H_{i, j_{\max}}(D)$ as the extreme Khovanov homology groups of $D$. Analogously, we will call $\{C_{i,j_{\rm{almax}}}(D), \partial_i\}$ the almost-extreme Khovanov complex, and the groups $H_{i, j_{\rm{almax}}}(D)$ the almost-extreme Khovanov homology groups of $D$ (the values of $j$-index ``jump'' by four \footnote{In the oriented version of Khovanov homology the values of $J$ ``jump'' by two, and therefore $J_{\rm{almax}}(\vec{D}) = J_{\max}(\vec{D})-2$.}, and therefore the name of ``almost-extreme'' is justified).
\end{Def}

In \cite{GMS} the authors introduced a geometric realization of the complex $\{C_{i,j_{\max}}, \partial_i\}$ in terms of a simplicial complex constructed from the diagram $D$. In Section \ref{FromDtoXD}, starting from a semiadequate diagram $D$, we construct a partial presimplicial set $\mathcal{X}_D$ such that its cellular chain complex is equivalent to the almost-extreme Khovanov complex of $D$; as a consequence, almost-extreme Khovanov homology of $D$ can be computed as the homology of the geometric realization of $\mathcal{X}_D$.

\section{Semiadequate links}
Semiadequate links were introduced in \cite{LT} as a generalization of alternating links in the context of the proof of the first Tait conjecture.

\begin{Def}\cite{AsaedaPrzytycki}
Let $s$ be a state of a link diagram $D$. The state graph of $s$, $G_s(D)$, consists of vertices and edges in bijection with circles and chords in $sD$, respectively, in such a way that an edge between two vertices corresponds to a chord connecting the associated circles in $sD$. See Figure \ref{stategraph}. 
\end{Def} 

\begin{figure}
\centering
\includegraphics[width = 11.8cm]{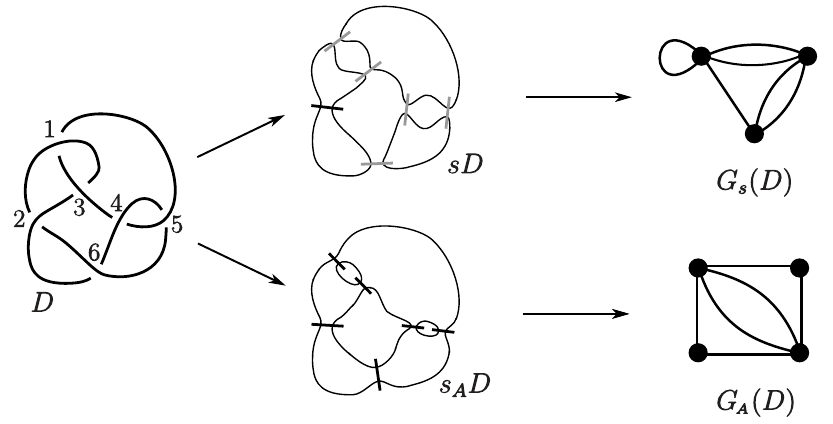}
\caption{\small{A diagram $D$ representing link $6^2_3$ in \cite{Rolfsen}, the resolutions corresponding to the states $s$ (mapping $1 \mapsto B, 2 \mapsto A, 3 \mapsto B, 4 \mapsto B, 5 \mapsto B, 6 \mapsto B)$ and $s_A$ and the corresponding state graphs $G_s(D)$ and $G_{s_A}(D)=G_A(D)$.}}
\label{stategraph}
\end{figure}

Notice that state graphs are planar. 


Given a link diagram $D$, let $s_A$ (resp. $s_B$) be the state assigning an $A$-label (resp. $B$-label) to every crossing. We write $G_A(D) = G_{s_A}(D)$ and $G_B(D) = G_{s_B}(D)$.

\begin{Def}\label{defAadequate}
A link diagram $D$ is $A$-adequate (resp. $B$-adequate) if $G_A(D)$ (resp. $G_B(D)$) contains no loops. $D$ is said to be adequate if it is both $A$-adequate and $B$-adequate. If $D$ is either $A$-adequate or $B$-adequate, it is called semiadequate. A link is said to be (semi)adequate if it admits a (semi)adequate diagram.  
\end{Def}

Without loss of generality, we consider semiadequate links to be $A$-adequate. Moreover, since Khovanov homology of the disjoint sum of two diagrams is given by K\"unneth formula from their individual Khovanov homology \cite{Kho}, from now on we assume that diagrams are connected.

\subsection{Khovanov homology of semiadequate links}\label{subkhovadequa}

In this section we present some facts about Khovanov homology of semiadequate links which will be useful for constructing the partial presimplicial set introduced in the next section.

In \cite{GMS} the states with maximal quantum index ($j_{\max}$) for a given diagram were characterized\footnote{We have adjusted the statement so it agrees with the framed version of Khovanov homology in Section~\ref{seckhovanov}.}: 

\begin{Prop}\rm{\cite{GMS}}\label{propmin}
Let $D$ be a link diagram with $c$ crossings and $s_A^+$ the state assigning an $A$-label to each of its crossing and a positive sign to every circle in $s_AD$. Then, $j_{\max}(D) = j(s_A^+)=c + 2|s_AD|$ and $j(s) = j_{\max}(D)$ if and only if $s \in S_{\max}$, where
$$S_{\max} = \{\mbox{enhanced states } s \, \big| \, |sD| = |s_AD| + |s^{-1}(B)|, \, \,  \tau(s) = |sD|\}.$$ 
\end{Prop}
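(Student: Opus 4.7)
The plan is to prove the two claims separately: first compute $j(s_A^+)$ explicitly to get the candidate maximum, then show this is indeed an upper bound, with equality exactly on $S_{\max}$.

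First I would unpack $j(s_A^+)$ directly from the definitions. Since $s_A^+$ assigns $A$ to every crossing, $\sigma(s_A^+) = |s_A^{-1}(A)| - |s_A^{-1}(B)| = c$, and since every sign is positive, $\tau(s_A^+) = |s_AD|$. Therefore $j(s_A^+) = \sigma(s_A^+) + 2\tau(s_A^+) = c + 2|s_AD|$, establishing the explicit value.

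For the upper bound, let $s$ be an arbitrary enhanced state with $b = |s^{-1}(B)|$ $B$-labels, so that $\sigma(s) = c - 2b$. Two elementary inequalities suffice. First, $\tau(s) = \sum \epsilon_i \leq |sD|$, with equality iff every circle carries a positive sign. Second, I would show $|sD| \leq |s_AD| + b$ by induction on $b$: we can reach $sD$ from $s_AD$ by flipping $b$ crossings one at a time from $A$ to $B$, and each flip either splits a circle into two (increasing $|sD|$ by $1$) or merges two circles into one (decreasing $|sD|$ by $1$), so the net change is at most $+b$. Combining,
\[
j(s) = \sigma(s) + 2\tau(s) \leq (c - 2b) + 2(|s_AD| + b) = c + 2|s_AD|,
\]
so $j_{\max}(D) = c + 2|s_AD| = j(s_A^+)$.

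Finally, the characterization of $S_{\max}$ falls out of tracking when each inequality is sharp: $\tau(s) = |sD|$ forces all positive signs, and $|sD| = |s_AD| + b$ forces every one of the $b$ label-flips on the way from $s_A$ to $s$ to be a splitting rather than a merging. Both conditions together are exactly the definition of $S_{\max}$, so $j(s) = j_{\max}(D)$ iff $s \in S_{\max}$. I do not anticipate any real obstacle; the only step requiring a tiny bit of care is the inductive comparison $|sD| \leq |s_AD| + |s^{-1}(B)|$, since one has to note that reordering the sequence of flips does not affect the final smoothing and that each individual flip is a Reidemeister-like local change on circles and chords with the claimed $\pm 1$ effect on circle count.
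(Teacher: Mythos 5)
Your proof is correct and complete. Note that the paper does not actually prove Proposition~\ref{propmin}; it cites it from \cite{GMS} and relies on it as a black box. Your argument is the natural direct one: compute $j(s_A^+)=c+2|s_AD|$, bound an arbitrary enhanced state by the two elementary inequalities $\tau(s)\leq|sD|$ and $|sD|\leq|s_AD|+|s^{-1}(B)|$ (the latter by flipping $A$-labels to $B$-labels one at a time, each flip changing the circle count by $\pm1$), and observe that equality in $j(s)$ forces equality in both, which is exactly the membership condition for $S_{\max}$. The one small point worth emphasizing, which you do flag, is that the condition $|sD|=|s_AD|+|s^{-1}(B)|$ is a property of the final state alone and does not depend on the order in which the flips are performed; given any fixed order, the telescoping sum of $\pm1$'s must then consist entirely of $+1$'s. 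This is essentially the same reasoning that \cite{GMS} uses to derive the result, so your proposal matches the intended source.
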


The key point of Proposition \ref{propmin} is the fact that differentials taking part in the extreme complex $\{C_{i,j_{\max}}(D), \partial_i\}$ split a positive circle into two positive ones. This allows the authors to construct a simplicial complex whose cohomology equals the extreme Khovanov homology of~$D$. 

In the case of $A$-adequate diagrams the extreme Khovanov homology contains just one non-trivial group $H_{c,c+2|s_A|}(D)=\mathbb{Z}$ (see e.g. \cite{Kho, PS}). 

We are now interested in studying the almost-extreme Khovanov complex of $A$-adequate diagrams, which is of the form $$\begin{array}{cccccccc} 0 & \longrightarrow & C_{c, j_{\rm{almax}}}(D) &\stackrel{\partial_c}{\longrightarrow} & C_{c-2, j_{\rm{almax}}}(D) &\stackrel{\partial_{c-2}}{\longrightarrow} & \ldots \end{array}.$$

We give the following characterization of those states generating the almost-extreme Khovanov complex:

\begin{Prop}\label{charalmostextreme}
Let $D$ be an $A$-adequate link diagram. Then, $j(s) = j_{\rm{almax}}(D)$ if and only if $s \in S_{\rm{almax}}$, where
$$S_{\operatorname{almax}} = \left\{ \mbox{enhanced states } s \, \big| \, \begin{array}{lccc}
|sD| = |s_AD|  & \mbox{and } & \tau(s) = |sD|-2 & \mbox{if } s = s_A\\
|sD| =|s_AD|+|s^{-1}(B)|-2 & \mbox{and } & \tau(s) = |sD| & \mbox{if } s \neq s_A
\end{array}\right\}.$$ 
\end{Prop}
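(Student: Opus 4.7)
First, I would translate the condition $j(s) = j_{\rm{almax}}(D)$ into a single arithmetic equality. Using $\sigma(s) = c - 2|s^{-1}(B)|$ and $j_{\max}(D) = c + 2|s_A D|$ from Proposition \ref{propmin}, the equation $j(s) = \sigma(s) + 2\tau(s) = j_{\max}(D) - 4$ reduces to
\[
\tau(s) = |s_A D| + |s^{-1}(B)| - 2.
\]

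To analyze when this can happen, I would observe that any state $s$ satisfies $|sD| = |s_A D| + |s^{-1}(B)| - 2k$ for some integer $k \geq 0$: toggling crossings one at a time to move from $s_A$ to $s$ changes the circle count by $\pm 1$ at each step, and if $p$ is the total number of splittings and $q$ the total number of mergings along such a path, then $p + q = |s^{-1}(B)|$ and $p - q = |sD| - |s_A D|$, so $k := q \geq 0$ is well defined. Similarly $\tau(s) = |sD| - 2r$, where $r \geq 0$ is the number of negatively enhanced circles. Substituting into the displayed equation gives $k + r = 1$, so $(k,r) \in \{(0,1),(1,0)\}$.

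I would then split into the two cases of the proposition. When $s = s_A$, we have $|s^{-1}(B)| = 0$ and $|sD| = |s_A D|$, which forces $k = 0$ and hence $r = 1$, yielding the first line of $S_{\rm{almax}}$. When $s \neq s_A$, both alternatives $(0,1)$ and $(1,0)$ are a priori possible, and the content of the proposition is that $A$-adequacy rules out the first. The reverse implication, that any state of the asserted form satisfies $j(s) = j_{\rm{almax}}(D)$, follows from direct substitution into $j(s) = \sigma(s) + 2\tau(s)$ and does not require adequacy.

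The main obstacle is therefore the elimination of the case $k = 0$ (equivalently $|sD| = |s_A D| + |s^{-1}(B)|$) for $s \neq s_A$. I would argue by toggling one crossing at a time from $s_A$ toward $s$: since $G_A(D)$ has no loops, every chord of $s_A D$ joins two \emph{distinct} circles, so the very first toggle is a merging and contributes at least $1$ to the total count $q$. Hence $k = q \geq 1$; combined with $k + r = 1$ this forces $(k,r) = (1,0)$, i.e., $|sD| = |s_A D| + |s^{-1}(B)| - 2$ and $\tau(s) = |sD|$, completing the characterization.
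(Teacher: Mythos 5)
Your proof is correct, and it is cleaner and more self-contained than the paper's. The paper argues level by level through the complex: it first identifies the generators at $|s^{-1}(B)|=0$, then notes that $A$-adequacy forces a merging at the first toggle so that the generators at $|s^{-1}(B)|=1$ have all circles positive, and finally appeals to Proposition~\ref{propmin} (the extreme-degree characterization) to handle $|s^{-1}(B)|\geq 2$ by viewing the state with one $B$-crossing and all positive circles as the $s_A^+$ state of a reduced diagram. You instead collapse everything into the single arithmetic identity $k+r=1$, where $k$ is the (path-independent) number of mergings from $s_A$ to $s$ and $r$ the number of negative circles, and then observe that $A$-adequacy pins down $(k,r)$ in each of the two cases. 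Both proofs hinge on the same key use of loop-freeness of $G_A(D)$ (the first toggle from $s_A$ is always a merging), but your reduction makes the dichotomy in the definition of $S_{\rm almax}$ transparent and avoids the somewhat informal invocation of Proposition~\ref{propmin} for the higher-$B$ strata. One small remark worth making explicit if you write this up: the path-independence of $q$ follows from the formula $2q=|s^{-1}(B)|+|s_AD|-|sD|$, and it is precisely this invariance that lets you conclude $k\geq 1$ for all $s\neq s_A$ from the behavior of one particular path.
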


\begin{proof}
Recall that $j(s) = \sigma(s) + 2\tau(s)$, with $\sigma(s) = |s^{-1}(A)| - |s^{-1}(B)|$ and $\tau(s)= \sum_{i=1}^{|sD|} \epsilon_i$. Moreover, $j_{\rm{almax}}(D) = c + 2|s_AD|-4$.

Therefore, the states $s$ with $|s^{-1}(B)| = 0$ (that is, $s=s_A$) and all but one positive circles in $sD$ generate $C_{c,j_{\rm{almax}}}$. 

Next, notice that the $A$-adequacy of $D$ implies that when passing from $|s^{-1}(B)| = 0$ to $|s^{-1}(B)| = 1$ two circles merge into one, and therefore the generators of $C_{c-2,j_{\rm{almax}}}$ are those states $s$ with $|s^{-1}(B)|=1$ and all its $|sD| = |s_AD|-1$ circles labeled with a positive sign. Starting at this point, the differentials in the complex produce an splitting when increasing the number of $B$-labels, as occurs in the extreme case studied in \cite{GMS}. Applying Proposition \ref{propmin} to the characterization of the cases when $|s^{-1}(B)| \geq 2$ completes the proof.
\end{proof}

Proposition \ref{charalmostextreme} can be analogously stated for the case of $B$-adequate diagrams in terms of $s_B$, $|s^{-1}(A)|$ and $j_{\rm{almin}}(D) = j_{\min}(D)+4$.

Notice that Proposition \ref{charalmostextreme} implies that $C_{c, j_{\rm{almax}}}(D) = \mathbb{Z}^{|s_AD|}$ and $C_{c-2, j_{\rm{almax}}}(D) = \mathbb{Z}^{c}$. Now, since $D$ is $A$-adequate, the differential $\partial_c: \mathbb{Z}^{|s_AD|} \rightarrow \mathbb{Z}^{c}$ merges two circles with different signs into a positive one. Let $s \in C_{c, j_{\rm{almax}}}(D)$ be a state whose (only) negative circle is $\mathfrak{c}$. Then $\partial_c$ maps $s$ to  the sum taken over all states $t$ in $C_{c-2, j_{\rm{almax}}}(D)$ such that the only $B$-chord in $tD$ comes from an $A$-chord adjacent to $\mathfrak{c}$ in $s_AD$. 

A chord in a state $s$ is \emph{admissible} if it has both endpoints in the same circle of $sD$. For any generating state $t \in C_{c-2, j_{\rm{almax}}}(D)$, the $B$-chord in $tD$ is admissible, and the admissible $A$-chords are those having their endpoints in the same circle as the $B$-chord. 

To understand the behaviour of $\partial_{c-2}$ in the almost-extreme Khovanov complex, it suffices to think of the Khovanov subcomplex of each state $t \in  C_{c-2, j_{\rm{almax}}}(D)$ as the extreme complex of a link $D'$ whose associated $s_A^+$ state equals $t$, and apply the construction in \cite{GMS} based on admissible chords in previous paragraph.

The following example clarifies the previous reasoning:

\begin{Ex}\label{exilluminating}
Let $D$ be the $A$-adequate link diagram depicted in Figure \ref{figbig} and $G = G_A(D)$. Since $c = 5$ and $|s_AD| = 3$, $j_{\rm{almax}} = 5+2 \cdot 3 - 4 = 7$. 

\begin{figure}[t]
\centering
\includegraphics[width = 14.5cm]{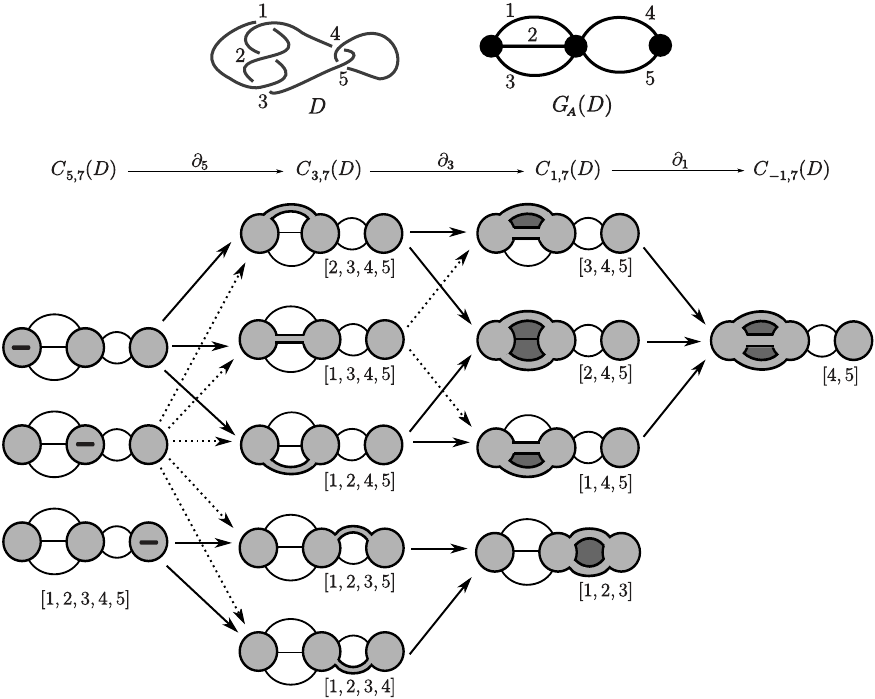}
\caption{\small{The $A$-adequate diagram $D$ illustrating Example \ref{exilluminating}, the state graph associated to $s_AD$, and the corresponding almost-extreme Khovanov complex. A non-labeled circle contains a positive sign; negative circles are labeled with $-$. For simplicity, $\text{$B$-chords}$ are not drawn.}}
\label{figbig}
\end{figure}

The three states generating $C_{5,7}(D)$ are obtained from $s_AD$ by associating a positive sign to every circle in $s_AD$ but one. A non-labeled circle contains a positive sign; negative circles are labeled with a minus ($-$) sign. 

The differential $\partial_5$ merges two circles with different signs into a positive one. Starting in $C_{3,7}$, all the circles are labeled with a positive sign, and therefore the complexes act as in the extreme case studied in \cite{GMS}, where just splittings are possible. 

If we label the chords by following the order of the crossings of $D$, then the bracket associated to each state in Figure \ref{figbig} correponds to its set of $A$-labels. This notation will be specially helpful in the reasoning in the proof of Theorem \ref{teorelating}.
\end{Ex}

\section{Partial presimplicial set from a semiadequate diagram}\label{FromDtoXD}

In this section, starting from a given semiadequate diagram, we describe an explicit combinatorial method for constructing a partial presimplicial set, and study the homotopy type of its geometric realization. In Section \ref{secproof} we will show that, in fact, the cellular chain complex of this geometric realization coincides with the almost-extreme Khovanov complex of the link diagram.

The key point of our construction consists of defining a partial presimplicial set associated to a loopless graph $G$. Even if in our settings we assume $G= G_A(D)$ for a given $A$-adequate diagram $D$, we stress that the construction below can be done for any loopless graph, not necessarily planar. In particular, Theorem \ref{teobipartito} holds also for loopless graphs, with $c$ denoting the number of edges. This may be useful in the context of categorification of links in thickened surface. 

Let $D$ be an $A$-dequate link diagram with $c=n+1$ (ordered) crossings and let $G = G_A(D)$, with $V(G) = \{T_0, \ldots, T_m\}$ and $E(G) = \{v_0 < \ldots < v_n\}$ the sets of vertices and edges of $G$, respectively. Recall that there is a one-to-one correspondence between the elements in $V(G)$ (resp. $E(G)$) and the the circles of $s_AD$ (resp. the crossings of $D$). 

We define a partial presimplicial set associated to $D$, $\mathcal{X}_D= (X_k, d_i)$, as follows:

Set $X_n = \{T_0, \ldots, T_m\}$. For each $0 \leq k < n$, define $X_k$ as the set of $(k+1)$-tuples of the form $(w_0, \ldots, w_k)$, with each $w_i < w_{i+1}$ corresponding to one of the edges of $G$ and satisfying that all the $n-(k+1)$ edges in $E(G) \setminus \{w_0, \ldots, w_k\}$ connect the same pair of vertices in the graph. In other words, the elements in $X_k$ consists of the ordered sequences of $k+1$ edges of $G$ such that the $n-(k+1)$ remaining edges are parallel. See Examples \ref{extrefoiltriangle} and \ref{ex8}.

The face maps are defined as
\small{$$d_i(T_j)=d_{i,n}(T_j)= \\
\left\{ 
\begin{array}{cl}
                               (v_0, \ldots, \hat v_i, \ldots, v_{n}) & \hbox{if the edge $v_i$ is adjacent to the vertex $T_j$ in $G$,} \\
                               0 & \hbox{otherwise.}
                             \end{array}
                           \right.$$ }
                           
For the case when $0<k<n$ the face maps are given by 
\small{$$d_i(w_0, \ldots, w_k)=d_{i,k}(w_0, \ldots, w_k)= $$
$$\left\{ 
\begin{array}{cl}
                               (w_0, \ldots,\hat w_i, \ldots, w_{k}) & \hbox{if the edge $w_i$ connects the same pair of vertices as $E(G) \setminus \{w_0, \ldots w_k\}$,} \\
                               0 &  \hbox{otherwise.}
                             \end{array}
                           \right.$$}

\begin{Ex}\label{extrefoiltriangle}
Consider the standard left-handed trefoil diagram, whose associated graph $G_A$ is a triangle. The associated partial presimplicial set and its geometric realization were described in Example \ref{exinicial}, with $r_0= v_0v_1$, $r_1=v_0v_2$ and $r_2= v_1v_2$, and therefore $|\mathcal{X}_{3_1}|=\mathbb{R}P^2$.
\end{Ex}

\begin{Ex}\label{ex8}

\begin{figure}[t]
\centering
\includegraphics[width = 12.5cm]{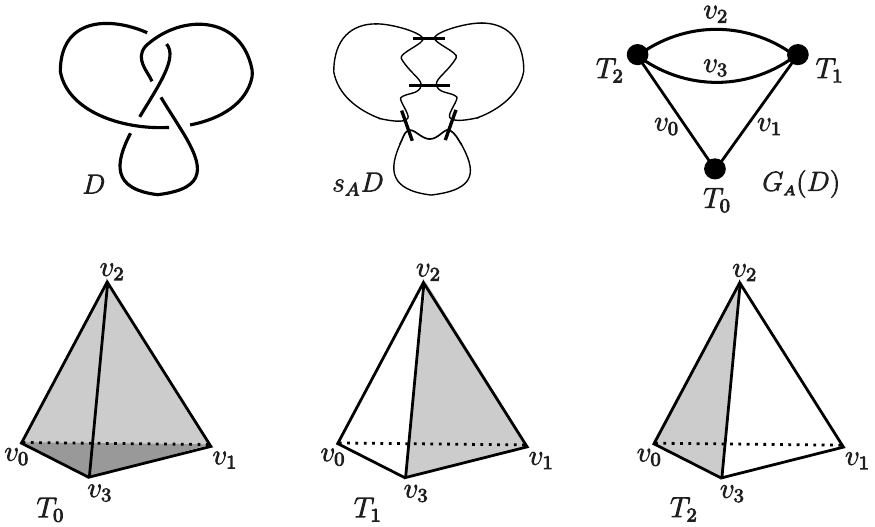}
\caption{\small{The diagram $D$ illustrating Example \ref{ex8}, its associated $s_AD$ and $G_A(D)$. The second row shows the three tetrahedra involved in the geometrization of $\mathcal{X}_D$. In the final step of the process simplexes with the same label must be identified and the shaded regions collapsed to point $b$.}}
\label{ejocho}
\end{figure}

Consider the diagram $D$ of the figure eight knot depicted in Figure \ref{ejocho}. Its associated resolution $s_AD$ contains 3 circles, $T_0, T_1, T_2$ together with $4$ chords corresponding to the $4$ crossings of $D$, and therefore $G_A(D)$ contains $3$ vertices and $4$ edges. The sets and the non-trivial face maps constituting the corresponding $\mathcal{X}_D = (X_n, d_i)$ are the following:
$$X_0 = \emptyset \quad \quad  X_1 = \{(v_0, v_1)\} $$
$$X_2 = \{(v_0, v_1, v_2), \, (v_0, v_1, v_3), \, (v_0, v_2, v_3), \, (v_1, v_2, v_3) \} \quad \quad X_3 = \{T_0, T_1, T_2\}$$

$$d_0(T_0) = (v_1,v_2,v_3) \quad d_1(T_0)= (v_0,v_2,v_3) \quad d_1(T_1) = (v_0,v_2,v_3) \quad d_2(T_1) = (v_0,v_1,v_3)$$  $$d_3(T_1) = (v_0, v_1,v_2) \quad  d_0(T_2) = (v_1,v_2,v_3) \quad d_2(T_2) = (v_0,v_1,v_3) \quad d_3(T_2) = (v_0,v_1,v_2)$$
$$d_2(v_0,v_1,v_2)= (v_0,v_1) \quad d_2(v_0,v_1,v_3) = (v_0,v_1)$$

To obtain the geometric realization of $\mathcal{X}_D$, we start with three tetrahedra labelled $T_0$, $T_1$ and $T_2$, and identify them by following the gluing instructions given by the face maps. An intermediate result before gluing and performing the contractions is illustrated in Figure \ref{ejocho}. Note that the shaded regions are collapsed to point $b$ in the next step, since they correspond to simplexes with trivial image under the face maps. After identifying simplexes with the same labels we get 
$$|\mathcal{X}_D| \sim \Sigma \mathbb{R}P^2.$$ 
\end{Ex}

\begin{Rem}\label{remrighttrefoil}
In the particular case when $G_A(D)$ consists on two vertices with $c>0$ edges connecting them (i.e., $m=1, n= c-1$), the resulting $\mathcal{X}_D$ is a presimplicial set, which implies that there is no need to collapse faces. In fact, $\mathcal{X}_D$ consists of two simplexes $\Delta^{c-1}$ whose boundaries are glued by the identity map, leading to $S^{c-1}$. The standard right-handed trefoil diagram, with $|\mathcal{X}_{\overline{3_1}}| \sim S^2$, is such an example. 
\end{Rem}

The next result shows that the graph $G_A(D)$ associated to the state $s_A$ determines the homotopy type of $|\mathcal{X}_D|$. 

\begin{Teo}\label{teobipartito}
Let $D$ be a connected $A$-adequate diagram and let $H$ be the simple graph obtained from $G=G_A(D)$ by replacing each multiedges by a single edge (that is, $v_i$ and $v_j$ are connected by one edge in $H$ iff they are connected by at least one edge in $G$). Let $\mathcal{X}_D$ be its associated partial presimplicial set defined as indicated above. If $D$ is a diagram with $c$ crossings and $p_1$ is the cyclomatic number of $H$, then 
$$|\mathcal{X}_D| \sim  \left\{ 
\begin{array}{lll}
\bigvee_{p_1}S^{c-2} \vee S^{c-1}& & \mbox{if } G \mbox{ is bipartite,} \\
\bigvee_{p_1 - 1} S^{c-2} \vee \sum^{c-3} \mathbb{R}P^2 & & \mbox{otherwise.}
\end{array}
\right.$$
\end{Teo}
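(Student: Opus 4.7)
The plan is to analyze the CW structure of $|\mathcal{X}_D|$ in two stages: first determine the homotopy type of the $(c-2)$-skeleton, and then compute the attaching maps of the top-dimensional cells $T_j$ indexed by $V(G)$.

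First I would decompose the subcomplex of cells of dimension $\leq c-2$ into one \emph{bundle subcomplex} $B_e$ for each edge $e\in E(H)$, where $B_e$ consists of all cells whose residual edges (the edges of $G$ not appearing in the defining sequence) lie within the parallel class of $e$. Different bundles share only the basepoint $b$, since no face map can change a residual set from one parallel class to another. Within a bundle of multiplicity $p=p_e$, cells are in bijection with non-empty subsets $S$ of the parallel class; the cell corresponding to $S$ has dimension $c-1-|S|$, and its non-trivial faces correspond to enlarging $S$ by one element of the class. A direct comparison shows that the cellular chain complex of $B_e$ coincides, after a linear reindexing, with the simplicial cochain complex of the standard $(p-1)$-simplex; since $\Delta^{p-1}$ is contractible, its reduced cohomology vanishes and only the top degree survives, translating to $\tilde H_*(B_e)=\mathbb{Z}$ concentrated in dimension $c-2$. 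Together with simple-connectivity of $B_e$ (automatic when the lowest cell dimension $c-1-p$ exceeds $1$, and verifiable directly otherwise via explicit van Kampen computation), Whitehead's theorem gives $B_e\simeq S^{c-2}$; hence $X^{(c-2)}\simeq\bigvee_{e\in E(H)}S^{c-2}$.

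The second step is to compute the attaching maps of the top cells. The cellular boundary formula gives $\partial T_j=\sum_{i\,:\,v_i\sim T_j}(-1)^i r_i$, and a direct sign bookkeeping shows that the contribution of each bundle $e$ to this sum equals $(-1)^{i_1^e}\zeta_e$ when $T_j$ is an endpoint of $e$ and vanishes otherwise, where $i_1^e$ is the smallest index of an edge in the parallel class of $e$ and $\zeta_e$ is the generator of $H_{c-2}(B_e)$. Thus, in the basis $\{\zeta_e\}_{e\in E(H)}$, the induced map $\bar\partial\colon\mathbb{Z}^{|V(G)|}\to\mathbb{Z}^{|E(H)|}$ is the unsigned incidence matrix of $H$ up to column signs, which do not affect its Smith normal form. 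For $c\geq 4$, Hurewicz identifies $\pi_{c-2}(\bigvee_{|E(H)|}S^{c-2})$ with $H_{c-2}$, so $|\mathcal{X}_D|$ is homotopy equivalent to the mapping cone of $\bar\partial$.

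To conclude, I would invoke the classical Smith normal form of the incidence matrix of a connected graph $H$: rank $|V(H)|-1$ with all elementary divisors equal to $1$ if $H$ is bipartite, and rank $|V(H)|$ with elementary divisors $1,\ldots,1,2$ otherwise, the $\mathbb{Z}/2$-torsion reflecting the presence of an odd cycle. Realizing the change of basis as a homotopy equivalence of wedges of simply-connected spheres (valid for $c\geq 4$), and using $p_1=|E(H)|-|V(H)|+1$, the mapping cone decomposes as $\bigvee_{p_1}S^{c-2}\vee S^{c-1}$ in the bipartite case (the $S^{c-1}$ arising from the one-dimensional kernel) and as $\bigvee_{p_1-1}S^{c-2}\vee\Sigma^{c-3}\mathbb{R}P^2$ otherwise (the summand $\Sigma^{c-3}\mathbb{R}P^2$ being the Moore space $M(\mathbb{Z}/2,c-2)$ built from the single degree-$2$ attachment). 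The remaining case $c=3$, where Hurewicz does not apply, is handled by direct inspection, the two possibilities being realized by Example~\ref{exinicial} ($\mathbb{R}P^2$) and Remark~\ref{remrighttrefoil} ($S^2$). The main obstacle will be the precise identification $B_e\simeq S^{c-2}$ together with the sign bookkeeping that reduces the attaching data from $G$ to the incidence matrix of $H$: these signs are essential for ensuring the elementary divisor $2$ (and hence the $\Sigma^{c-3}\mathbb{R}P^2$ summand) appears exactly when $H$ is non-bipartite.
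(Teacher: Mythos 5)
Your proposal is correct in its essentials but takes a genuinely different route from the paper. The paper works geometrically: it assembles $|\mathcal{X}_D|$ from $m+1$ copies of $\Delta^{c-1}$, glues along the maximal faces corresponding to a spanning tree of $G$ to produce a $(c-1)$-ball, and then observes that the remaining $p_1$ gluings are identifications of $(c-2)$-disks in the boundary of that ball. A disk-identification lemma (Lemma~\ref{lemmaproof}) then yields the two homotopy types according to whether all identifications are orientation reversing. Multiedges are handled by merging their parallel faces into a single $(c-2)$-disk. In contrast, you analyze the cellular chain complex directly: decompose the $(c-2)$-skeleton into bundles $B_e$ indexed by edges of $H$, show each is a $(c-2)$-sphere by matching its reduced chain complex with the cochain complex of $\Delta^{p_e-1}$, and reduce the attaching data of the top cells to the Smith normal form of the (column-sign-adjusted) unoriented incidence matrix of $H$. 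This has the advantage of making the bipartite/non-bipartite dichotomy completely explicit through the classical fact that the unoriented incidence matrix of a connected graph has elementary divisors $1,\dots,1$ (rank $|V|-1$) exactly when the graph is bipartite and $1,\dots,1,2$ (rank $|V|$) otherwise; the paper leaves this correspondence to orientation-reversal implicit. On the other hand your route needs more homotopy-theoretic input (Hurewicz and Whitehead to identify $B_e$ with a sphere, realizing $GL(\mathbb{Z})$ by self-equivalences of a wedge to diagonalize the attaching map), and the paper's spanning-tree-to-ball step is arguably more elementary. The places where you would need to be careful, and which you correctly flag as ``the main obstacle,'' are: (i) simple-connectivity of $B_e$ when $c-1-p_e\leq 1$, which is not automatic from the skeleton and requires either a direct van Kampen argument or a collapsing argument; and (ii) the sign bookkeeping showing that $\sum_{v_i\in P_e}(-1)^i r_i$ is a generator of $H_{c-2}(B_e)$ (so the coefficient $\alpha_e$ is $\pm1$ and not some larger integer), which is what lets you pass from $G$ to the incidence matrix of $H$ with the correct elementary divisors. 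Both points are verifiable and I see no obstruction, so this is a legitimate alternative proof.
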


The proof of Theorem \ref{teobipartito} uses the following well-known lemma, which can be proven by using general position argument. 

\begin{Lema}\label{lemmaproof} 
Let $D^n$, $n> 2$, be an oriented $n$-dimensional disk with $2k$ disks of dimension $n-1$ embedded in its boundary $\partial D^n$. We assume these disks to have disjoint interiors and to inherit the orientation of $D^n$. Let $K$ be a space obtained from $D^n$ by identifying these disks in pairs by homeomorphisms and contracting the rest of the boundary and the boundaries of the $2k$ $(n-1)$-dimensional disks to a point\footnote{Note that $K$ is homotopy equivalent to a $CW$-complex with a single vertex and cells in demension $n$ and $n-1$, thus a Moore space.}. Then 
\begin{enumerate}
\item If all identifications maps are orientation reversing, then the space $K$ is homotopy equivalent to the wedge of spheres $$\bigvee_k S^{n-1} \vee S^n.$$
\item If at least one identification map is orientation preserving, then the space $K$ is homotopy equivalent to $$\bigvee_{k-1} S^{n-1} \vee \sum^{n-2} \mathbb{R}P^2.$$
\end{enumerate}
\end{Lema}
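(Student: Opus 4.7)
My plan is to give $K$ the obvious CW-structure (one vertex $b$, $k$ cells of dimension $n-1$, one cell of dimension $n$), identify the attaching map of the top cell up to homotopy, and then recognize $K$ as a wedge.

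First I observe that after the quotient each pair of identified disks $D_i^1,D_i^2$, whose common boundary is collapsed to $b$, becomes a single sphere $S^{n-1}_i$ based at $b$; hence the $(n-1)$-skeleton satisfies $K^{(n-1)} \simeq \bigvee_{i=1}^{k} S^{n-1}_i$. The attaching map of the $n$-cell is a map $\phi:S^{n-1}=\partial D^n \to K^{(n-1)}$. Since $n>2$ we have $n-1 \geq 2$ and $\bigvee_k S^{n-1}$ is $(n-2)$-connected, so by Hurewicz $\pi_{n-1}(\bigvee_k S^{n-1}) \cong H_{n-1}(\bigvee_k S^{n-1}) = \mathbb{Z}^k$. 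Thus the homotopy class $[\phi]$ is determined by the $k$ integer degrees $d_i := \deg(\mathrm{pr}_i\circ\phi)$.

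Next I compute $d_i$ by the local-degree formula. Pick a regular value $p$ in the open image of $D_i^1$ in $S^{n-1}_i$; its preimage in $S^{n-1}=\partial D^n$ consists of two points, $p_1 \in D_i^1$ and $p_2 \in D_i^2$. Orient $S^{n-1}_i$ by declaring the collapse $D_i^1 \to D_i^1/\partial D_i^1 \cong S^{n-1}_i$ to have degree $+1$. The contribution at $p_1$ is then $+1$; near $p_2$ the map factors through $h_i^{-1}$, so the contribution equals the orientation sign of $h_i$: $+1$ if $h_i$ is orientation-preserving and $-1$ if it is orientation-reversing (both disks inherit the same orientation from $\partial D^n$, so this sign really is the sign of $h_i$). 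Adding, $d_i=0$ when $h_i$ is OR and $d_i=2$ when $h_i$ is OP.

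Finally I distinguish the two claims. In case (1) every $d_i=0$, so $[\phi]=0$ in $\mathbb{Z}^k$; since we are in the stable Hurewicz range, $\phi$ is null-homotopic and $K \simeq K^{(n-1)} \vee S^n = \bigvee_k S^{n-1} \vee S^n$. In case (2) at least one $d_{i_0}=2$ and the remaining degrees are $0$ or $2$. Because $n-1\geq 2$, the wedge $\bigvee_k S^{n-1}$ is a simply connected Moore space whose group of self-homotopy equivalences realizes all of $GL_k(\mathbb{Z})$ on $\pi_{n-1}$ (generated by coordinate permutations, the degree $-1$ self-maps of each $S^{n-1}$, and transvections built from the pinch $S^{n-1}\to S^{n-1}\vee S^{n-1}$ composed with a fold). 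Choosing an automorphism that sends $[\phi]=(2,\dots,2,0,\dots,0)$ to $(2,0,\dots,0)$ identifies $K$ with the cofiber of a map that has degree $2$ into one factor and is trivial on the others. This cofiber splits as $(S^{n-1}\cup_2 D^n) \vee \bigvee_{k-1} S^{n-1}$, and since $S^{n-1}\cup_2 D^n = \Sigma^{n-2}(S^1\cup_2 D^2) = \Sigma^{n-2}\mathbb{R}P^2$, we obtain $K \simeq \bigvee_{k-1} S^{n-1} \vee \Sigma^{n-2}\mathbb{R}P^2$, as claimed. The main subtlety I expect is the local-degree computation — keeping straight the inherited orientations on $D_i^1$ and $D_i^2$ versus the chosen orientation on the quotient sphere $S^{n-1}_i$ so that each contribution has an unambiguous sign; the other place where the hypothesis $n>2$ is essential is in passing from $[\phi]$ to $K$ via the stable Hurewicz theorem and the $GL_k(\mathbb{Z})$-action, both of which fail for $n=2$.
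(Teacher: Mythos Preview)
Your proof is correct. The paper does not actually prove this lemma; it calls it ``well-known'' and says only that it ``can be proven by using general position argument.'' Your approach---reading off the CW structure (already noted in the paper's footnote), invoking Hurewicz to reduce the attaching map to a vector of degrees in $\mathbb{Z}^k$, computing those degrees locally, and then normalizing via the $GL_k(\mathbb{Z})$-action by self-equivalences of $\bigvee_k S^{n-1}$---is a standard and complete argument. The paper's ``general position'' hint most plausibly refers to the freedom, for $n>2$, to isotope the embedded $(n-1)$-disks in $\partial D^n$ freely; this is one geometric way to realize the elementary moves that generate your $GL_k(\mathbb{Z})$-action, so the two routes are essentially the same, with yours making the algebra explicit.
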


\begin{Rem}
The case when $n=1$ implies that either $k=0$ or $1$, and in both cases $K=S^1$.\\ If $n=2$, then $K \sim F \vee \bigvee_{p-1} S^1$. More precisely, $K$ is homotopy equivalent to a $2k$-gon with sides identified in pairs to get surface $F$ and then all vertices left after this identification (say $p$ of them) are further identified to a point. There are two possible cases:
\begin{enumerate} 
\item[(1)] If all identifications are orientation reversing, then $F=F_g$ where $F_g$ is an oriented surface of genus $g=\frac{k-p+1}{2}$. Notice that the suspension $\Sigma F_g \sim S^3 \vee \bigvee_{2g}S^2$, compare to \cite[Chapter 4.1]{Hatcher}.
\item[(2)] If at least one identification is orientation reversing, then $F=F_{g'}= \#_{g'}RP^2$, where $g'= k-p+1$. Notice that the suspension $\Sigma F_{g'} \sim \Sigma RP^2 \vee \bigvee_{g'-1}S^2$.
\end{enumerate}
\end{Rem}

\begin{proof}[Proof of Theorem \ref{teobipartito}]
First, assume that $c>3$, so Lemma \ref{lemmaproof} can be applied. Recall that, by construction, $|\mathcal{X}_D|$ is built from $m+1$ copies of $\Delta^{c-1}$, and each $(c-2)$-dimensional (maximal) face of each $\Delta^{c-1}$ corresponds to an edge in $G$. After making all identifications and collapses induced by the face maps, each maximal face survives in two of the $m+1$ copies.

Assume now the case when $G=H$, that is, $G$ is a simple graph, which implies that $X_k$ is empty when $k<c-2$. Let $\mathcal{T}_G$ be a spanning tree for $G$ (i.e., a subgraph of $G$ containing all its vertices and no cycles). In order to construct $|\mathcal{X}_D|$, start by gluing the maximal faces of the $m+1$ copies of $\Delta^n$ according to the rules given by those face maps concerning the edges in $\mathcal{T}_G$. The result is homeomorphic to a ball $D^{c-1}$. 

Next, we need to perform the $p_1$ identifications which are left, which consists of gluing in pairs $2p_1$ $(c-2)$-dimensional disks embedded in $\partial D^{c-1}$, and collapse the rest of the boundary to a point. Lemma~\ref{lemmaproof} completes the proof.

In the case when $G$ contains multiedges the reasoning is analogous: Suppose there are $k$ edges connecting two vertices in $G$, and write $f_1, \ldots f_t$ for the $(c-2)$-dimensional faces of $\Delta^{c-1}$ associated to each of them\footnote{Reader familiar with Pachner moves will notice the similarity with Pachner simplex addition \cite{Pachner}.}. One can think on the $t$ dentifications corresponding to these faces as one single identification, by considering the $(c-2)$-dimensional face $f_1\cup \ldots \cup f_t$. This single identification preserves/reverses the orientation if and only if the $t$ individual identifications do. This completes the proof for the case when $c>3$.

If $c\leq 3$, then $G$ is either a triangle or $p_1=0$. The case when $G$ is a triangle corresponds to the case when $D$ is the standard left handed trefoil diagram, and therefore $|{\mathcal X}_D| = RP^2$, as shown in Example~\ref{extrefoiltriangle}. Finally, if $c\leq 3$ and $p_1=0$, then we check case by case that in the 7 possibilities we get ${\mathcal X}_D =S^{c-1}$. Notice that in the case of the trivial knot diagram with $c=0$, the graph $G=\bullet$ and $|{\mathcal X}_{\bigcirc}| =\emptyset = S^{-1}$.
\end{proof}

\begin{Cor}
Consider the diagram $D\#T(2,q)$, that is, the connected sum of a diagram $D$ and the standard diagram of the torus link of type $(2,q)$, with $q>0$ (see Figure \ref{figbig} for $q=2$). Then, $$|{\mathcal X}_{D\#T(2,q)}| \sim \Sigma^q|{\mathcal X}_{D}|.$$
In particular, the case $q=1$ corresponds to the positive Reidemeister I move.  
\end{Cor}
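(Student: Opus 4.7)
The plan is to reduce the statement to a direct application of Theorem \ref{teobipartito}, by tracking how the three data entering that formula---the crossing number $c$, the cyclomatic number $p_1$ of the simplification of $G_A$, and the bipartiteness of $G_A$---behave under connected sum with $T(2,q)$.

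First I would describe $G_A(T(2,q))$ for the standard positive diagram: at each of the $q$ crossings the $A$-smoothing glues the two strands ``vertically'', so $s_A$ consists of exactly two circles joined by $q$ parallel chords. Hence $G_A(T(2,q))$ is the multigraph on two vertices $u_1,u_2$ with $q$ parallel edges; compare Remark \ref{remrighttrefoil}, which already gives $|\mathcal{X}_{T(2,q)}|\sim S^{q-1}$. Next I would analyze the connected sum. Performing $D\#T(2,q)$ amounts to removing a small arc from $D$---lying on some circle $\mathfrak{c}_j$ of $s_AD$, corresponding to a vertex $T_j$ of $G:=G_A(D)$---and a small arc from $T(2,q)$ on the circle corresponding to, say, $u_1$, and identifying endpoints. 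In the $A$-resolution this merges $\mathfrak{c}_j$ with the $u_1$-circle into a single circle, leaves the $u_2$-circle untouched, and preserves the $q$ chords between them. Setting $G':=G_A(D\#T(2,q))$, we conclude that $G'$ is obtained from $G$ by attaching one new ``pendant'' vertex $v^{\ast}$ (the surviving $u_2$) to $T_j$ via $q$ parallel edges. In particular $G'$ is loopless, so $D\#T(2,q)$ remains $A$-adequate, and it has $c+q$ crossings.

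It then remains to compare the invariants on both sides. The simplification $H'$ of $G'$ equals the simplification $H$ of $G$ plus one additional pendant edge at $T_j$, so $p_1(H')=p_1(H)=p_1$. Bipartiteness is likewise preserved: any proper $2$-coloring of $G$ extends to $G'$ by coloring $v^{\ast}$ opposite $T_j$, and conversely any odd cycle of $G$ remains an odd cycle in $G'$. Plugging these data into Theorem \ref{teobipartito}, in the bipartite case we get
\[
|\mathcal{X}_{D\#T(2,q)}|\sim \bigvee_{p_1} S^{c+q-2}\vee S^{c+q-1} \sim \Sigma^q\Bigl(\bigvee_{p_1} S^{c-2}\vee S^{c-1}\Bigr)\sim \Sigma^q|\mathcal{X}_D|,
\]
and in the non-bipartite case
\[
|\mathcal{X}_{D\#T(2,q)}|\sim \bigvee_{p_1-1} S^{c+q-2}\vee \Sigma^{c+q-3}\mathbb{R}P^2 \sim \Sigma^q|\mathcal{X}_D|,
\]
using that $\Sigma$ commutes with wedges together with $\Sigma^q S^k\sim S^{k+q}$ and $\Sigma^q\Sigma^{c-3}\mathbb{R}P^2\sim \Sigma^{c+q-3}\mathbb{R}P^2$.

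The main obstacle is really just the combinatorial bookkeeping that identifies $G'$ as $G$ with a single pendant multi-edge attached; once this is established, the rest is a mechanical invocation of Theorem \ref{teobipartito}. A modest amount of care is needed for the degenerate range $c\leq 3$ treated separately at the end of the proof of that theorem (for instance if $D=\bigcirc$, so $c=0$ and $|\mathcal{X}_D|=S^{-1}$, the corollary still reads correctly as $|\mathcal{X}_{T(2,q)}|\sim S^{q-1}\sim \Sigma^q S^{-1}$), but these small cases are handled by the same direct enumeration used there.
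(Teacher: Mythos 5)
Your proof is correct and takes the route the authors clearly intended: the corollary is stated without proof immediately after Theorem \ref{teobipartito}, and your argument---identifying $G_A(D\#T(2,q))$ as $G_A(D)$ with a pendant vertex attached by $q$ parallel edges, so $c\mapsto c+q$ while $p_1$ and bipartiteness are unchanged, then matching both sides of the theorem's formula---is exactly the intended deduction. The low-crossing cases $c+q\leq 3$ that you flag are the only points requiring separate inspection, and your unknot check confirms the pattern holds there as well.
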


\section{Almost-extreme Khovanov homology via partial presimplicial sets}\label{secproof}

In this section we relate the partial presimplicial set constructed in Section \ref{FromDtoXD} with the almost-extreme Khovanov complex of the associated diagram.

\begin{Teo}\label{teorelating}
Let $D$ be an $A$-adequate diagram 
and let $\mathcal{X}_D$ be the partial presimplicial set constructed in Section \ref{FromDtoXD}. 
Then, the almost-extreme Khovanov complex of $D$ is chain homotopy equivalent to the cellular chain complex of $|\mathcal{X}_D|$.
\end{Teo}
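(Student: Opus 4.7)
The plan is to build an explicit isomorphism of chain complexes between the (reduced) cellular chain complex of $|\mathcal{X}_D|$ and the almost-extreme Khovanov complex $(C_{*,j_{\rm{almax}}}(D),\partial_*)$, under the degree correspondence $k\leftrightarrow i=2k-c+2$; a chain isomorphism is in particular a chain homotopy equivalence. On generators the map $\phi$ is defined as follows: each top cell $T_j\in X_{c-1}$ is sent to the state $s_A$ with a negative sign on the circle $T_j$ and positive signs on the remaining circles, and each lower cell $(w_0,\ldots,w_k)\in X_k$ with $k<c-1$ is sent to the all-positive state whose $B$-labels are exactly the edges in $E(G)\setminus\{w_0,\ldots,w_k\}$.

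The first step is to verify that $\phi$ is a bijection on basis elements in every degree. Proposition \ref{charalmostextreme} handles the top two degrees directly: the $T_j$'s correspond to the $|s_AD|$ generators of $C_{c,j_{\rm{almax}}}(D)$, and the $(c-1)$-tuples in $X_{c-2}$ correspond to the $c$ single-$B$-label generators of $C_{c-2,j_{\rm{almax}}}(D)$. For the lower degrees, the heart of the argument is the following combinatorial translation: a state $t$ with $|t^{-1}(B)|=r\geq 1$ lies in $C_{c-2r,j_{\rm{almax}}}(D)$ if and only if its set of $B$-labelled edges consists of pairwise parallel edges of $G$. One direction follows from the remark after Proposition \ref{charalmostextreme}: the Khovanov subcomplex sitting above a single-$B$-label state $t$ behaves like the extreme complex of a diagram whose state graph is obtained from $G$ by contracting the unique $B$-edge, and the chords admissible in that extreme setting (loops after contraction) are precisely the edges of $G$ parallel to the $B$-edge; iterating this observation forces all $B$-edges of $t$ to be mutually parallel in $G$. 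The reverse direction reduces to checking directly that $r$ pairwise parallel $B$-smoothings between a fixed pair of circles of $s_AD$ yield $|tD|=|s_AD|+r-2$. Since the defining condition of $X_k$ is exactly that $E(G)\setminus\{w_0,\ldots,w_k\}$ is a set of pairwise parallel edges, this identifies the two bases in every degree.

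Next I would match the nonzero terms of the two differentials. For the top, $d_i(T_j)\neq 0$ iff $v_i$ is incident to $T_j$ in $G$; on the Khovanov side, $\partial_c$ applied to the state $s_A$ with $T_j$ negative picks up an all-positive single-$B$ term for each edge incident to $T_j$, since the $B$-smoothing of a crossing not touching $T_j$ cannot merge the unique negative circle with a positive one. For lower cells, $d_i(w_0,\ldots,w_k)\neq 0$ iff $\{w_i\}\cup(E(G)\setminus\{w_0,\ldots,w_k\})$ is a set of pairwise parallel edges, which by the translation above is precisely the condition that the Khovanov differential term at crossing $w_i$ remains in the almost-extreme subcomplex. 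Therefore the supports of the two boundary maps coincide under $\phi$.

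Finally I would reconcile signs. The cellular boundary contributes $(-1)^i$ at position $i$, while the Khovanov differential contributes $(-1)^{k_i}$, where $k_i$ is the number of $B$-labels in the source state following $w_i$ in the chosen ordering of crossings; a direct count gives $k_i\equiv(c-1-p_i)-(k-i)\pmod 2$, where $p_i$ denotes the global position of $w_i$ in $E(G)$, so the discrepancy $k_i-i$ depends only on $p_i$ and on $k$. This is a coboundary and can be absorbed by rescaling each generator by an explicit sign $\epsilon(w_0,\ldots,w_k)=\pm 1$ of the form $(-1)^{\sum_j p_j+g(k)}$ for a suitable function $g(k)$; the adjusted $\phi$ is then a genuine chain isomorphism. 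The main obstacle is the combinatorial translation of Step 1, especially the ``only if'' direction, which needs the graph-contraction argument to be iterated carefully so that every higher $B$-label is forced to be parallel to the previous ones; once that translation is pinned down, the matching of supports and the sign cocycle reduce to direct bookkeeping.
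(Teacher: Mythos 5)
Your proposal is correct and follows essentially the same route as the paper's (very terse) proof: identify the top cells $T_j$ with the negatively-enhanced circles of $s_A$, identify lower cells $(w_0,\ldots,w_k)$ with the all-positive states whose $B$-labels are the remaining pairwise-parallel edges, and observe that the face maps exactly encode the admissible splittings (and the top face map the merges) of the almost-extreme differential. You fill in considerably more detail than the published argument — in particular the explicit characterization of $S_{\operatorname{almax}}$ in terms of pairwise-parallel $B$-edges and the sign rescaling $\epsilon(w_0,\ldots,w_k)=(-1)^{\sum_j p_j+g(k)}$ needed to turn $\phi$ into an honest chain isomorphism — both of which the paper glosses over, but the underlying construction and correspondence are the same.
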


\begin{proof}
The proof relies on the fact that we defined the sets and face maps of $\mathcal{X}_D$ according to the behaviour of the almost-extreme Khovanov complex of semiadequate links explained in Subsection \ref{subkhovadequa}:

The $n$-dimensional simplex $T_l$ represents the state of $C_{c,j_{\rm{almax}}}(D)$ having a negative sign in the associated circle $\mathfrak{c}_l$, for $0\leq l \leq m$.  

Now recall that for each $k$, $X_k$ consists of the ordered sequences of $k+1$ edges of $G=G_A(D)$ such that the $n-k$ remaining edges are parallel. See Example \ref{exilluminating}. Since there is a one-to-one correspondence between the edges of $G$ and the crossings in $D$, the $k+1$ edges appearing in the sequence corresponds to the $A$-labelled crossings of $D$. The condition that the $n-k$ remaining edges must be parallel and the definition of the face maps $d_i$ come from the fact that, appart from the starting merging given by $\partial_{c}$, the differentials in the almost-extreme complex consists of the splitting of one positive circle into two positive ones. 
\end{proof}

Theorems \ref{teobipartito} and \ref{teorelating} imply Theorem \ref{teomain}. As a consequence, we recover the following result from \cite{PPS,PS} for connected diagrams:

\begin{Cor}\label{corkhova}
The almost-extreme Khovanov homology of $D$ can be computed as the homology groups of $|\mathcal{X}_D|$. Namely, the non-trivial almost-extreme Khovanov homology groups of $D$ are 
\begin{itemize}
\item[(i)] If $G$ is bipartite, $H_{c,c+2|s_A|-4}(D)= \mathbb{Z}$ and $H_{c-2,c+2|s_A|-4}(D)= \mathbb{Z}^{p_1}$.
\item[(ii)] If $G$ contains odd cycles, $H_{c-2,c+2|s_A|-4}(D)= \mathbb{Z}^{p_1-1} \oplus \mathbb{Z}_2$.
\end{itemize}
\end{Cor}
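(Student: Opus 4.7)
The plan is to exhibit, dimension by dimension, an explicit bijection between the cells of $|\mathcal{X}_D|$ (other than the basepoint $b$) and the free generators of the almost-extreme Khovanov complex of $D$, and to check that under this bijection the cellular boundary of $|\mathcal{X}_D|$ and the Khovanov differential agree up to signs prescribed by a common ordering of crossings and edges.

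First I would fix the dimension shift: a $k$-cell of $|\mathcal{X}_D|$ will correspond to an enhanced state $s$ with $|s^{-1}(A)|=k+1$ and $|s^{-1}(B)|=n-k$, that is, to a generator of $C_{2k+2-c,\,j_{\rm{almax}}}(D)$. In top degree $k=n=c-1$, I send $T_l\in X_n$ to the unique enhancement of $s_A$ with the circle $\mathfrak{c}_l$ negative and all others positive; by Proposition \ref{charalmostextreme} these are exactly the $|s_AD|=m+1$ generators of $C_{c,\,j_{\rm{almax}}}(D)$. In degree $k<n$, I send $(w_0,\ldots,w_k)\in X_k$ to the all-positive enhancement of the state whose $A$-labelled crossings are exactly $\{w_0,\ldots,w_k\}$. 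The non-trivial point here is to check that such a state lies in $C_{c-2(n-k),\,j_{\rm{almax}}}(D)$ iff the $n-k$ edges of $G$ outside $\{w_0,\ldots,w_k\}$ are mutually parallel. By $A$-adequacy the first $B$-flip is a merge, lowering $|sD|$ by one; a short induction on subsequent flips then shows that $|sD|$ increases by one (a split) at each step precisely when the newly $B$-labelled edge has both endpoints on the common circle already produced by the already-flipped edges, which occurs iff all the $B$-edges connect the same pair of vertices in $G$. Together with Proposition \ref{charalmostextreme} this matches the count $|sD|=|s_AD|+(n-k)-2$ with $\tau(s)=|sD|$.

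Next I would verify the boundary maps. The initial Khovanov differential $\partial_c$ merges the negative circle $\mathfrak{c}_l$ of the state associated with $T_l$ with an adjacent positive circle, producing the all-positive state whose unique $B$-chord comes from an $A$-chord incident to $\mathfrak{c}_l$ in $s_AD$; this is exactly the nonzero content of $d_{i,n}(T_l)$, which is declared nonzero iff the edge $v_i$ is incident to the vertex $T_l$ in $G$. For $k<n$ the almost-extreme differential reduces, by the discussion preceding Example \ref{exilluminating}, to the extreme-case rule of \cite{GMS} applied to a sub-diagram: it turns an $A$-chord into a $B$-chord by splitting a positive circle into two positive ones, which preserves the ``maximal $|sD|$, all-positive'' configuration iff the newly flipped edge is parallel to the already $B$-labelled ones. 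This is precisely the rule declaring $d_{i,k}(w_0,\ldots,w_k)$ nonzero. Finally, the signs match because both the cellular boundary $\sum_i(-1)^i d_i$ and the Khovanov differential $\sum_t(s:t)\,t$ are governed by the position of the relevant crossing/edge in the same chosen ordering; any residual discrepancy is absorbed by a diagonal sign change of the basis, which is a chain isomorphism.

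I expect the main obstacle to be the combinatorial step verifying that the ``remaining edges parallel'' condition in the definition of $X_k$ precisely singles out the generators of $C_{c-2(n-k),\,j_{\rm{almax}}}(D)$ for $n-k\ge 2$: it requires tracking how iterated $B$-smoothings affect the state graph structure under $A$-adequacy and reconciling the outcome with the count in Proposition \ref{charalmostextreme}. Once this bijection is in place, the matching of the face maps with the Khovanov differentials and of the signs is essentially bookkeeping.
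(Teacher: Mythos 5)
Your argument reconstructs the content of Theorem \ref{teorelating}: a dimension-preserving bijection between cells of $|\mathcal{X}_D|$ and generators of the almost-extreme complex under which the boundary maps agree. This is sound and essentially matches the paper's own reasoning, although in the paper that identification is already available as Theorem \ref{teorelating} and need not be re-derived inside the corollary. The genuine gap is that you never establish parts (i) and (ii), which assert the explicit groups $\mathbb{Z}$, $\mathbb{Z}^{p_1}$, and $\mathbb{Z}^{p_1-1}\oplus\mathbb{Z}_2$ in the stated bidegrees. Identifying the two chain complexes shows that almost-extreme Khovanov homology equals $\tilde H_*(|\mathcal{X}_D|)$, but by itself this does not compute either side. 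The paper closes this gap by invoking Theorem \ref{teobipartito}, which determines $|\mathcal{X}_D|$ up to homotopy as $\bigvee_{p_1}S^{c-2}\vee S^{c-1}$ in the bipartite case and as $\bigvee_{p_1-1}S^{c-2}\vee\Sigma^{c-3}\mathbb{R}P^2$ otherwise; reading off the reduced homology (using $\tilde H_1(\mathbb{R}P^2)=\mathbb{Z}_2$ and the suspension shift) and translating cell dimension $k$ to Khovanov index $i=2k+2-c$ then yields precisely the stated groups. Without an appeal to Theorem \ref{teobipartito}, or an independent computation of the homotopy type of $|\mathcal{X}_D|$, the concrete formulas in (i) and (ii) remain unproven.
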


\begin{figure}
\centering
\includegraphics[width = 12.5cm]{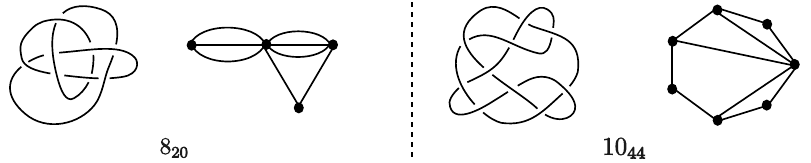}
\caption{\small{Diagrams of knots $8_{20}$ and $10_{44}$ from Rolfsen's table \cite{Rolfsen} and their associated $G_A$ graphs.}}
\label{grafos}
\end{figure}

\begin{Ex}
For a given link diagram $D$, the ranks of the (framed) Khovanov homology groups $H_{i,j}(D)$ can be arranged into a table with columns and rows indexed by $i$ and $j$, respectively. Tables \ref{Table3}-\ref{Table1} are taken from Knot Atlas \cite{Kno} with a modification so they match framed setting. An entry of the form $a$ represents the free group $\mathbb{Z}^a$, while $a_c$ represents the group $\mathbb{Z}^a_c$. We highlight the almost-extreme row. 

\begin{table}[t]
\centering
\includegraphics[width = 12.1cm]{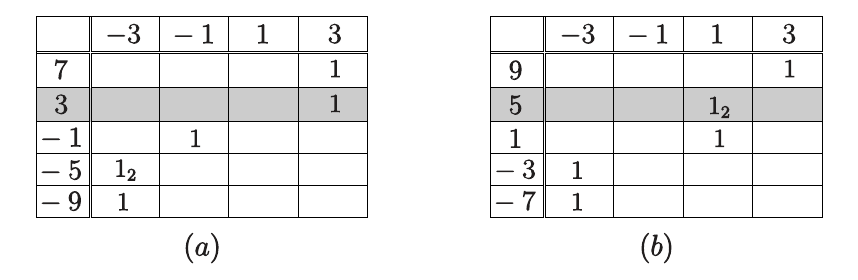}
\vspace{-0.2cm}
\caption{\small{Framed Khovanov homology of the standard diagrams of right-handed trefoil link $(a)$ and left-handed trefoil link $(b)$.}}
\label{Table3}
\end{table}

\begin{table}[t]
\centering
\includegraphics[width = 12.1cm]{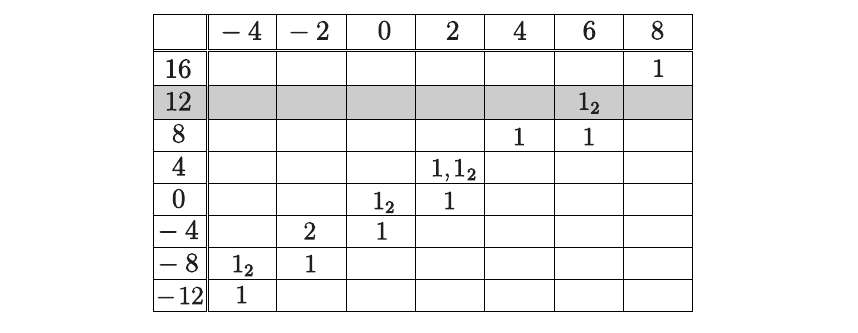}
\vspace{-0.2cm}
\caption{\small{Framed Khovanov homology of the diagram of knot $8_{20}$ in \cite{Rolfsen}.}}
\label{Table2}
\end{table}

\begin{table}[t]
\centering
\includegraphics[width = 12.1cm]{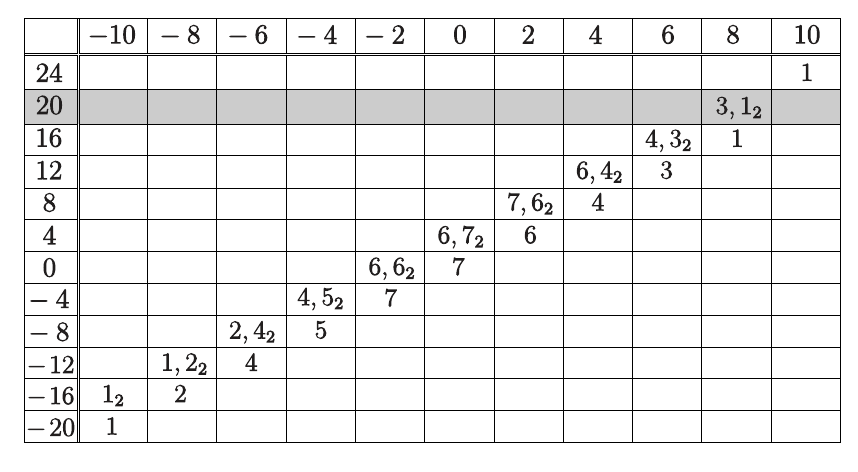}
\vspace{-0.2cm}
\caption{\small{Framed Khovanov homology of the diagram of knot $10_{44}$ in \cite{Rolfsen}.}}
\label{Table1}
\end{table}

In Table \ref{Table3} we present the Khovanov homology groups for the standard diagrams of right-handed trefoil $(a)$ and left-handed trefoil $(b)$. In Remark \ref{remrighttrefoil} we studied the case of the right-handed trefoil $\overline{3_1}$, whose associated $G_A(\overline{3_1})$ consists of two vertices with $3$ parallel edges, leading to $|\mathcal{X}_{\overline{3_1}}|= S^2$. Therefore, Corollary~\ref{corkhova} implies that the only non-trivial almost-extreme Khovanov homology group is $H_{3,3}(\overline{3_1})= \mathbb{Z}$. In the case of left-handed trefoil $3_1$, with $G_A(3_1)$ a triangle as shown in Example \ref{extrefoiltriangle}, Corollary \ref{corkhova} leads to $H_{1,5}(3_1)=\mathbb{Z}_2$. These results agree with Table \ref{Table3}.

Consider now the standard diagram of non-alternating knot $8_{20}$ from Rolfsen's table, which is $A$-adequate, and whose associated graph $G_A(8_{20})$ has $4$ vertices and a cycle of length $3$, as shown in Figure~\ref{grafos}$(a)$. Corollary \ref{corkhova} implies that the only non-trivial Khovanov homology group in its almost-extreme complex is $H_{6,12}(8_{20})= \mathbb{Z}_2$, which matches Table \ref{Table2}.

Finally, consider the standard diagram of knot $10_{44}$, whose associated graph $G_A(10_{44})$ is loopless and has $7$ vertices, as shown in Figure \ref{grafos}$(b)$. As $p_1=4$, $H_{8,20}(10_{44})= \mathbb{Z}^3 \oplus \mathbb{Z}_2$ and all other almost-extreme Khovanov homology groups are trivial, which agrees with Table \ref{Table1}.
\end{Ex}

\begin{Rem}
When applying Corollary \ref{corkhova} to an oriented $A$-adequate link diagram $D$ with $n$ negative crossings, the non-trivial almost-extreme Khovanov homology groups are $H^{-n,c-3n-|s_A|+2}(D)= \mathbb{Z}$ and $H^{-n+1,c-3n-|s_A|+2}(D)= \mathbb{Z}^{p_1}$ in the case when $G_A(D)$ is bipartite, and $H^{-n+1,c-3n-|s_A|+2}(D) = \mathbb{Z}^{p_1-1} \oplus \mathbb{Z}_2$ otherwise.
\end{Rem}

\vspace{0.1cm}

\noindent \textbf{Acknowledgements:} J\'ozef H. Przytycki is partially supported by the Simons Foundation Collaboration Grant for Mathematicians - 316446 and CCAS Dean’s Research Chair award. Marithania Silvero is partially supported by MTM2016-76453-C2-1-P and FEDER, and acknowledges financial support from the Spanish Ministry of Economy and Competitiveness, through the Mar\'ia de Maeztu Programme for Units of Excellence in R\&D (MDM-2014-0445).  The authors are grateful to the Department of Mathematics of the University of Barcelona for its hospitality.

\vspace{0.8cm}


\end{document}